\renewcommand{\leq}{\leqslant}
\renewcommand{\geq}{\geqslant}
\newcommand{\rr}{{\mathbb{R}}}
\newcommand{\hh}{{\mathbb{H}}}
\newcommand{\hhh}{\mathcal{H}}
\newcommand{\escpr}[1]{\big<#1\big>}
\newcommand{\Sg}{\Sigma}
\newcommand{\Om}{\Omega}
\newcommand{\eps}{\varepsilon}
\newcommand{\area}{A}
\newcommand{\nau}{\nabla^u}
\newcommand{\na}[1]{\nabla^{#1}}
\newcommand{\mc}[1]{\mathcal{#1}}
\newcommand{\somma}[1]{\sum\limits_{#1=1}\limits^{2n-1}}
\definecolor{grey}{rgb}{.7,.7,.7}
\DeclareMathOperator{\divv}{div}
\newtheorem{theorem}{Theorem}[section]
\newtheorem{proposition}[theorem]{Proposition}
\newtheorem{lemma}[theorem]{Lemma}
\newtheorem{corollary}[theorem]{Corollary}
\theoremstyle{definition}
\newtheorem{definition}{Definition} 
\theoremstyle{remark}
\numberwithin{equation}{section}
\begin{document}

\title[stable minimal graphs in $\hh^n$]{Stable minimal graphs in the Heisenberg group $\hh^n$}

\author[G.~Citti]{Giovanna Citti} \address{Departimento di
Matematica \\
Universit\`a di Bologna \\ Piazza di porta san Donato 5, Bologna \\ Italia}
\email{giovanna.citti@unibo.it}

\author[M.~Galli]{Matteo Galli} \address{Departimento di
Matematica \\
Universit\`a di Bologna \\ Piazza di porta san Donato 5, Bologna \\ Italia}
\email{matteo.galli8@unibo.it}

\date{\today}

\thanks{ The research leading to these results has received funding from the People
Programme (Marie Curie Actions) of the European Union's Seventh
Framework Programme FP7/2007-2013/ under REA grant agreement n. 607643}
\subjclass[2000]{53C17, 49Q20} 
\keywords{Sub-Riemannian geometry, contact manifolds, stable area-stationary surfaces, Schauder estimates}

\begin{abstract}
We prove that a strictly stable minimal $C^2_h$ intrinsic graph G is locally area-minimizing, i.e. given any  $C^1_h$ graph $S$ with the same boundary, $\text{Area}(G)<\text{Area}(S)$ unless $G=S$. As  a consequence we show the existence and the uniqueness of $C^\infty$ minimal graphs with prescribed small boundary datum.
\end{abstract}

\maketitle

\thispagestyle{empty}

\bibliographystyle{amsplain}

\tableofcontents

\section{Introduction}

The Heisenberg group, denoted as $\hh^n$ can be identified with 
$\rr^{2n+1}$, with the choice at every point of an 
\emph{horizontal $2n$-dimensional distribution} $\hhh$, such that 
$[\hhh,\hhh]=\hhh_2 $ has dimension 1, 
\begin{equation}\label{strat} 
T\hh^n = \hhh\oplus \hhh_2, 
[\hhh,\hhh]=\hhh_2 \text{ and  }[\hhh, \hhh_0]=0.\end{equation} 

Starting from the seminal paper \cite{Ga-Nh} by Garofalo and Nhieu, where general properties and existence of sets with minimum perimeter are proved in Carnot groups, a remarkable effort has been devoted to the development of an exhaustive theory for minimizers of the area functional in the sub-Riemannian setting. However the theory is still very far from being complete. 

Due to the lack of symmetry of the space, 
surfaces can have very different expression if expressed as graph 
on different variables.  
Identifying via the exponential coordinates the 
Heisenberg group with its algebra, we can consider graphs defined on the horizontal distribution 
$\hhh$ with values in $\hhh_2$. 
Graphs of this type with prescribed mean curvature $f$, called $t$-graphs, are critical points of the functional 
\begin{equation*}
\label{eq:F}
\mathcal{F}(u)=\int_\Om |\nabla u+\vec{F}|+\int_\Om fu,
\end{equation*}
on a domain $\Om$, where $\vec{F}$ is a vector field and $f\in L^\infty(\Om)$. In \cite{MR2262784} Cheng et al. proved the existence of $C^1$ minimizing $t$-graphs in $\hh^n$ that 
at our knowledge, this is the only existence results for prescribed mean curvature graphs. 

\smallskip

On the other side while choosing graphs whose normal at every point belongs
to the horizontal distribution we obtain the so called intrinsic graphs 
introduced by Franchi, Serapioni and Serra Cassano in \cite{FSSC4} and \cite{FSSC1}. It has been proved, 
in \cite{MR2223801} and \cite{MR2263950}, that in this case there are 
non-linear vector fields $$(X_{1,u}, \cdots, X_{n,u})$$ such that the area functional of the 
graph of $u$ can be written as 
\[
\area(u)=\int_\Omega  \sqrt{1+|\na{u} u|^2}\,\, d\mc{L}^{2n}. 
\]

We remark that this sub-Riemannian area functional is not convex. 
This is why standard existence technique do not apply. However existence of minima of this functional have been obtained
by Serra Cassano and Vittone, \cite{MR3276118}, but they have only $BV$ regularity. 
In particular it is not clear if they satisfy the associated first variation equation 
which express the curuvature of a surface: 
\begin{equation}\label{eq:minimalintro}
H(u)=d\area(u) =\somma{i} X_i^{u}\bigg(\frac{X_i^{u} (u)}{ \sqrt{1+|\na{u} (u)|^2}}\bigg)=0.
\end{equation} 
This is a second order quasilinear subelliptic equation, but due to the 
expression of the vector fields $X_{i,u}$, 
the regularity of the solution 
is not understood, and there is still a big gap between 
this existence result for minima and their regularity properties. 
Regularity results in the three-dimensional case, with special attention to the first Heisenberg group $\hh^1$ has been obtained only for Lipschitz continous solutions.  Among them, we would like to stress \cite{CHY2}, 
\cite{MR2583494} 
, \cite{MR2831583}, 
\cite{MR3406514}, \cite{MR3412382} and \cite{MR3474402}. 
In dimension bigger then three the only regularity result in $\hh^n$, $n>1$, is proved by Capogna et al. in \cite{MR2774306}, 
always for Lipschitz continuous solutions. 
The main obstacle to study directly the the problem
\begin{equation}\label{critical_points}
\begin{cases}
H_v =0, & \text{in } \Omega\\
v=\phi, & \text{in } \partial\Omega
\end{cases}
\end{equation}
is the fact that the operator 
\begin{equation}\label{Luvu}L_u(v) = dH(u)(v)\end{equation} does not satisfy the maximum principle for general functions $u$. 
Indeed in the subriemannian setting the stability operator  $L_u v$ takes the form
\[
\begin{aligned}
L_u v &=\somma{i,j}X_i^{u} (a_{ij}(\na{u}u)\,X_j^u v)+\somma{i} (\partial_{2n}u) \, a_{i 2n-1}(\na{u}u) \, X_i^u v \\
&+ \bigg(  \partial_{2n}\bigg( \frac{ X^u_{2n-1}u }{\sqrt{1+|\na{u}u|^2}} \bigg) +\somma{i} X_i^{u}(\partial_{2n}u \, a_{i 2n-1}(\na{u}u)) \bigg)\, v,
\end{aligned}
\]
(see Proposition~\ref{prop:2ndvariation}), and nothing is known on the sign of the 
zero order term.  In addition Schauder estimates are not available for 
general subriemannian vector fields.

\smallskip

In this paper we investigate properties of strictly stable critical points, 
which are graphs $G_u$ with vanishing mean curvature in an open set $\Omega$ 
such that, for all compact test functions $v \in C^2_h(\Omega)$, $v\not\equiv 0$, the \emph{index form}
\[
\mathcal{I}(v,v)=-\,\escpr{L_u(v), v}
\]
is strictly positive, where $L_u(v)$ is defined in \eqref{Luvu}. 
Our main result is stated in Theorem \ref{thm:calibration} below: 
\begin{quotation}
Let $u\in C^2_h(\tilde{\Om})$ and let $u$ be a strictly stable critical point of the 
area functional in a domain $\Omega\subset\tilde{\Om}$. Then there exists a tubular neighborhood $U$ of $\Omega$ such that for any  $C^1_h$ graph $S\subset U$, $\partial\Omega=\partial S$, we have $\area(G_u)< \area(S)$ or $G_u=S$.
\end{quotation}

This result extends to the present setting the one proved in Riemannian manifolds by White \cite{MR1305283}, using regularity theorems from geometric measure theory.   Grosse-Brauckmann \cite{MR1432843}, see also \cite[\S 109]{MR1015936}, show that another way to prove this result is to foliate a neighbourhood of a strictly stable extremal with stationary surfaces. The field of normals to the leaves is a calibration, and the statement follows by the divergence theorem. 

When $u$ is 
a strictly stable critical point $u$, $L_u(v)$ is positive, 
hence invertible. Hence it is possible to show 
an existence result in a neighborhood of any stable point. 
In particular, since $u=0$ has this property we deduce that 

\begin{quotation}\label{intro_existence}
There exists $\eps_0>0$ such that if 
$||\phi ||_{L^\infty(\partial \Omega)} \leq \eps_0$, 
then the problem \eqref{critical_points} 
has an unique solution $v\in C^\infty(\Omega)$. \end{quotation}

\smallskip

The delicate aspect of the proof of the existence theorem is the lack of Schauder 
estimates at the boundary. As we mentioned before, they  are not known in the 
subriemannian setting and the only 
result in this direction is due to Jerison \cite{MR639800}. However his proof 
can not be repeated in general Lie groups, since 
it is based on Fourier transform. On the contrary,  
internal Schauder estimates are well known in this context,
after the results of \cite{MR0436223} (we also quote a more recent contribution 
\cite{MR2298970}). Hence we prove an ad hoc version of Schauder estimates 
with a penalization on the boundary of the set, 
which are sufficient to obtain the result. 

\smallskip

Using this existence result, we are 
able to follow the same idea of  \cite{MR1432843}  
and construct a foliation by minimal graphs in a tubular neighborhood of a strictly stable minimal graph. The positivity of the  operator $L_u$  for strictly stable function $u$ implies that 
the operator $L_u$ 
satisfies the maximum principle (see also Proposition \ref{lem:maximum principle}). As a consequence 
we will establish the proof of Theorem \ref{thm:calibration}.

\smallskip

As a corollary we will deduce 
that this critical point of the area functional, found above for 
$||\phi ||_{L^\infty(\partial \Omega)} \leq \eps_0$, 
is indeed a stable minimum with prescribed boundary datum. 

\smallskip

The paper is organized as follows. In Section 2 we provide the necessary background on the sub-Riemannian Heisenberg group and intrinsic graphs with prescribed mean curvature. In Section 3 we introduce the stability operator. In section 3 we prove the Schauder estimates and our existence result. The local area-minimizing property of a strictly stable minimal intrinsic graph will appear in Section 5.

\section{Preliminaries}
In this section we gather some results to be used in later sections.

\subsection{The Heisenberg group $\hh^n$}
The structure of the Heisenberg group $\hh^n$ can be modeled on $\rr^{2n+1}$ using the following basis of left-invariant vector fields 
\[
\begin{split}
&X_i=\frac{\partial}{\partial x_i}, i=1,\dots, n-1, \quad  
X_i=\frac{\partial}{\partial x_i}-x_{i-n+1}\frac{\partial}{\partial x_{2n}}, i=n,\dots, 2n-2,
\\ &X_z=\frac{\partial}{\partial z}, \quad X_{2n-1}=\frac{\partial}{\partial x_{2n-1}}-z\frac{\partial}{\partial x_{2n}}, \quad
X_{2n}=\frac{\partial}{\partial x_{2n}}.
\end{split}
\]
The vector fields $\{X_z,X_1,\dots, X_{2n-1}\}$ generate the \emph{horizontal distribution} $\hhh$, while  $T$ is called the Reeb vector field and it is transverse to $\hhh$. A vector  field $X$ is called \emph{horizontal} if $X\in \hhh$. A horizontal curve is a $C^1$ curve whose tangent vector lies in the horizontal distribution. 

Note that 
\[
[X_z,X_{2n-1}]=[X_i,X_{i+n-1}]=T, \quad i=1,\dots, n-1,
\] 
while the other commutators vanish, so that $\hhh$ is a bracket-generating distribution and $\hh^n$ has vanishing pseudo-hermitian Webster curvature and pseudo-hermitian torsion, see \cite{Dr-To} or \cite{Gaphd}. For this reason the Heisenberg group is the model example of pseudo-hermitian manifolds and play the same role that the Euclidean space has with respect to a Riemannian manifold.

\subsection{The left invariant metric}
\label{subsec:g}
We shall consider on $\hh^n$ the Riemannian metric $g=\escpr{\cdot\,,\cdot}$ so that $\{X_z,X_1,\dots, X_{2n-1},T\}$ is an orthonormal basis at every point.  The restriction of $g$ to $\mathcal{H}$ coincides with the usual sub-Riemannian metric in $\hh^n$ induced by the vector fields $X_z,X_1,\dots, X_{2n-1}$. 

For any tangent vector $U$ on $\hh^n$ we define $J(U)=D_UT$, where $D$ is the Levi-Civita connection associated to the Riemannian metric $g$.  Then we have 
\[
J(X_z)=X_{2n-1},\, J(X_{2n-1})=-X_z,\, J(X_i)=X_{i+n-1},\, J(X_{i+n-1})=-X_i,
\]
for $i=1,\dots, n-1$, and $J(T)=0$, so that $J^2=-\text{Id}$ when restricted to $\mathcal{H}$.  The involution $J:\mathcal{H}\to\mathcal{H}$ provides a complex structure on $\hh^n$, see \cite{Bl}.

\subsection{Geometry of surfaces in $\hh^n$}
Given a ${C}^1$ surface $\Sigma$ immersed in $\hh^n$ we define the \emph{sub-Riemannian area} of $\Sg$ by
\begin{equation}\label{eq:area2}
A(\Sigma)=\int\limits_{\Sigma} |N_h| d\Sigma,
\end{equation}
where $N$ is the unit normal vector with respect to the metric $g$, $N_h$ is the orthogonal projection of $N$ to $\mathcal{H}$  and $d\Sigma$ is the Riemannian area element of $\Sigma$. The singular set $\Sigma_0$ consists of those points $p$ where $\hhh_p$ coincides with the tangent plane $T_p\Sg$ of $\Sg$.  We define the \emph{horizontal unit normal vector} $\nu_h(p)$ and the \emph{characteristic vector field} $Z(p)$ by
\begin{equation}\label{def:Znu_h}
\nu_h(p):=\frac{N_h(p)}{|N_h(p)|}, \quad Z(p):=J(\nu_h)(p)
\end{equation}
for all $p\in\Sg-\Sg_0$. Since $Z_p$ is orthogonal to $\nu_h$ and horizontal, we get that $Z_p$ is tangent to $\Sg$.

\subsection{Euclidean Lipschitz graphs in $\hh^n$}

Let $W=\{(0,x)\in\hh^n:x\in \rr^{2n}\}$,  we consider the graph $\Sg=\{(z,x): z=u(x), x\in \Omega\}\subset \hh^n$ of an Euclidean Lipschitz function $u:\Omega\subset W\rightarrow \rr$. Let $\{E_i\}_{i=1,\dots, 2n-1}$ be a basis of the horizontal tangent space $T\Sg\cap\hhh$, where
\[
\begin{split}
&E_i= (X_i u) \,X_z+X_i ,\quad   i=1,\dots, 2n-1.
\end{split}
\]
We  denote by $\{X_{i}^u\}_{i=1,\dots, 2n-1}$ a basis of the horizontal tangent vectors  projected to $\Om$
\[
\begin{split}
&X_{i}^u=X_i ,\quad   i=1,\dots, 2n-2 \\
&X_{2n-1}^u= \frac{\partial}{\partial x_{2n-1}}+u(x)\frac{\partial}{\partial x_{2n}}.
\end{split}
\]
The gradient $\nabla^u$ is defined as
\[
\nabla^u=(X_{1}^u, \dots, X_{2n-1}^u)
\]
and $\nabla^u u$ is well-defined and continuous in $\Omega$, since $G_u$ in a so-called \emph{intrinsic graph}, \cite{FSSC1} and \cite{MR2600502}, in fact the integral curves of $X_z$ starting from $\Omega$ meet $G_u$ in exactly one point. The area formula \eqref{eq:area2} for a graph $G_u$ can be expressed as
\begin{equation}\label{eq:areagraph}
A(G_u)=\int_\Om (1+|\nabla^u u|^2)^{1/2}d\mathcal{L}^{2n}, 
\end{equation}
where $\mathcal{L}^{2n}$ denotes the Lebesgue measure on $\Omega$, \cite[Proposition~2.22]{MR2223801}.

\subsection{Graphs with prescribed mean curvature}

Let $G_u$ the graph of an Euclidean Lipschitz function $u:\Omega\subset W\rightarrow \rr$. $G_u$  has \emph{prescribed mean curvature} $f$ if it is
a critical point of the functional
\begin{equation}\label{eq:prescribedfunctional}
A(G_u\cap B) -\int_{E_u\cap B} f,
\end{equation}
for any bounded open set $B$ in the cylinder $\{(z,x)\in \hh^n: x\in \Omega\}$. Here we have denoted by $E_u=\{(z,x)\in \hh^n: x\in \Omega, z<u(x)\}$ the subgraph of $G_u$. We remark that our definition is the counterpart of the ones given in three-dimensional sub-Riemannian contact manifolds, \cite{MR3412382} and \cite{MR3474402}, and in the Euclidean setting, \cite[(12.32) and Remark~17.11]{MR2976521}.

\subsection{Distance generated by vector fields}

Note that the vector fields $X_{1}^u, \dots, X_{2n-1}^u$ satisfy Hormander's finite rank condition in $\rr^{2n}$. Consequently they give rise to a control distance $d_u$, whose metric balls $B_r$ of radius $r$ have volume comparable to $r^{2n+1}$, where $2n+1$ is the homogeneous dimension of the space $(\rr^{2n},d_u)$. The distance $d_u$ coincides with the sub-Riemannian distance restricted to the vertical plane containing $\Omega$, see \cite{NSW}.

\subsection{Function spaces} Let $f$ be a continuous function on $E\subset \hh^n$. We say that $f\in C^k_h(E)$ if $Y_1(\dots(Y_k(f)))$ exists continuous, for any $Y_1, \dots, Y_k\in \hhh$. It is easy to check that $f\in C^{2k}_h(E)$ implies $f\in C^k(E)$. 

Given $0<\alpha<1$, we define $C^{0,\alpha}_d(\Omega)$ as the space composed by all functions $f:\Omega\rightarrow \rr$ having finite the following norm
\[
||f||_{\alpha, d, \Omega}:= \sup\limits_\Omega |f|+\sup\limits_{x,y\in \Omega, x\neq y} d^\alpha_{x,y} \frac{|f(x)-f(y)|}{d_u(x,y)|^\alpha},
\]
where 
\[
d_{x,y}:=\min(d_u(x,\partial\Omega), d_u(y,\partial\Omega))
\]
and $d_u$ denotes the distance induced by the vector fields $X^u_{i}$, $i=1, \dots, 2n-1$.
We also say that $f\in C^{2+\alpha}_d(\Omega)$ if the norm
\[
||f||_{2+\alpha,d,\Omega}:=  \sup\limits_\Omega |f|+ \sum_{i=1}^{2n-1}\sup\limits_\Omega d |X_i^u f|+\sum_{i,j=1}^{2n-1}||d^2 X_i^uX_j^u f||_{\alpha,d,\Omega}<+\infty.
\]

\subsection{First variation of the Area functional}
We first recall that, if $u$ is a graph, Lipschitz with respect to the 
Euclidean metric, the area of the graph $G_{u}$ is
\[
\area(G_{u})=\int_\Omega  \sqrt{1+|\na{u} u|^2}\,\, d\mc{L}^{2n}
\]
(see for example \cite{MR2979606}, \cite{DGNAV}, \cite{MR2774306}).  
Moreover, if $u$ is a critical point of the functional \eqref{eq:prescribedfunctional} we have that 
\begin{equation}\label{eq:1stvariationdebole}
\int\limits_\Omega \bigg\{   \frac{ \sum\limits_{i=1}\limits^{2n-2} X_iu\,X_i\phi + 
X_{2n-1}^uu(X_{2n-1}^u)^*\phi}{\sqrt{1+|\nau u|^2}} - f \phi \bigg\}\, d\mathcal{L}^{2n}=0
\end{equation}
for any $\phi\in C^1_0(\Omega)$. 
In order to simplify notations we will denote 
\begin{equation}\label{divergence_term}A_i(\nau u) = \frac{ X_i u}{\sqrt{1+|\nau u|^2}} \, \text{ for } i= 1\cdots 2n-2\quad 
A_{2n-1}(\nau u) = \frac{ X^u_{2n-1} u}{\sqrt{1+|\nau u|^2}}.\end{equation}

As a consequence the definition of \emph{mean curvature} for the graph $G_{u}$ reads (see for example \cite{MR2979606}, \cite{DGNAV}, \cite{MR2774306}): 
\begin{equation}\label{def:H_u}
H_{u}:=\somma{i} X_i^{u}\bigg(\frac{X_i^{u} u}{ \sqrt{1+|\na{u} u|^2}}\bigg) = 
\somma{i} X_i^{u}A_i(\nau u) 
\end{equation}
We can also consider the operator of the minimal surface equation in the non-divergence form 
\begin{equation}\label{def:M_u}
M_{u}=\somma{i} a_{ij}(\nabla^u u)X_i^{u}X_j^{u} (u),
\end{equation}
where 
\begin{equation}\label{def:aij}
a_{ij}:\rr^{2n}\rightarrow \rr, \quad  a_{ij}(p)=\delta_{ij}-\frac{p_i p_j}{1+|p|^2}. 
\end{equation}

\section{Stability operator for minimal graphs}

In this section we define the stability operator for subriemannian minimal graphs.

\begin{proposition}\label{prop:2ndvariation} Let $G_u=\{z=u(x): x\in \Omega \}$ be a $C^2_h$  graph in $\hh^n$ and let $v\in C^{2}_h(\Omega)$. We consider a variation of $G_u$ of the form 
$G_{u+sv}=\{z=u(x)+sv(x): x\in \Omega \}$. Then the second variation of the area of $G_u$ is
\[
\frac{d^2}{ds^2}\bigg|_{s=0} \area(G_{u+sv})=\int_\Omega v\, L_u v \, d\mathcal{L}^{2n},
\]
where
\begin{equation}\label{def:Lu}
\begin{aligned}
L_u v &=\somma{i,j}X_i^{u} (\frac{a_{ij}(\na{u}u)}{\sqrt{1+|\na{u}u|^2}}\,X_j^u v)+\somma{i} (\partial_{2n}u) \, a_{i 2n-1}(\na{u}u) \, X_i^u v \\
&+ \bigg(  \partial_{2n}\bigg( \frac{ X^u_{2n-1}u }{\sqrt{1+|\na{u}u|^2}} \bigg) +\somma{i} X_i^{u}(\partial_{2n}u \, a_{i 2n-1}(\na{u}u)) \bigg)\, v,
\end{aligned}
\end{equation}
and the functions $a_{ij}$ are defined in \eqref{def:aij}.

\end{proposition}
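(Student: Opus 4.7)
The plan is to differentiate $\area(G_{u+sv})=\int_\Omega F(x,s)\,d\mc{L}^{2n}$ with $F(s)=\sqrt{1+|W(s)|^2}$ and $W(s)=\nabla^{u_s}(u_s)$, $u_s=u+sv$, twice in $s$ at $s=0$ and then integrate by parts. The structural observation that drives everything is that only the last vector field in the frame depends on the function defining the graph: $X_i^{u_s}=X_i$ for $i\le 2n-2$, while $X_{2n-1}^{u_s}=X_{2n-1}^u+sv\,\partial_{x_{2n}}$.

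Setting $w_i(s):=X_i^{u_s}(u_s)$, one reads off
\[
w_i(s)=X_iu+sX_iv\ (i\le 2n-2),\qquad w_{2n-1}(s)=X_{2n-1}^uu+s(X_{2n-1}^uv+v\,\partial_{2n}u)+s^2 v\,\partial_{2n}v,
\]
so that $\dot w_i(0)=X_i^uv+\delta_{i,2n-1}\,v\,\partial_{2n}u$ and $\ddot w_{2n-1}(0)=2v\,\partial_{2n}v$ (the other $\ddot w_i(0)$ vanish). Applying the chain rule to $F(s)=\sqrt{1+|W(s)|^2}$ and using the algebraic identity $|W'|^2-(W\cdot W')^2/F^2=\sum_{i,j}a_{ij}(W)\,W'_iW'_j$, with $a_{ij}$ as in \eqref{def:aij}, one obtains
\[
F''(0)=\frac{1}{\sqrt{1+|\nau u|^2}}\sum_{i,j=1}^{2n-1}a_{ij}(\nau u)\,\dot w_i\,\dot w_j\ +\ \frac{2\,(X_{2n-1}^u u)\,v\,\partial_{2n}v}{\sqrt{1+|\nau u|^2}}.
\]

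The remaining work is purely algebraic. Substitute the expressions for $\dot w_i$ to split $F''(0)$ into a piece quadratic in the $X_i^uv$, a cross piece linear in $X_i^uv$ and $v$, and a piece purely proportional to $v^2$ and $v\,\partial_{2n}v$. Integrate against $d\mc{L}^{2n}$ over $\Omega$ and, using that $v$ is compactly supported in $\Omega$, move the horizontal derivatives by parts. Here one invokes $(X_i^u)^*=-X_i^u$ for $i\le 2n-2$ and $(X_{2n-1}^u)^*\phi=-X_{2n-1}^u\phi-\phi\,\partial_{2n}u$, since $X_{2n-1}^u=\partial_{x_{2n-1}}+u\,\partial_{x_{2n}}$ has Euclidean divergence $\partial_{2n}u$. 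The quadratic terms in $X_i^uv$ produce, after integration by parts, the principal part $\sum X_i^u\bigl(a_{ij}(\nau u)(1+|\nau u|^2)^{-1/2}X_j^uv\bigr)$ of $L_uv$; the cross terms, combined with the divergence defect of $X_{2n-1}^u$, yield the first-order drift $\sum(\partial_{2n}u)\,a_{i,2n-1}(\nau u)X_i^uv$; and the $v\,\partial_{2n}v$ piece, rewritten as $\tfrac12\partial_{2n}(v^2)$, together with the leftover pure $v^2$ contributions, assembles the zero-order coefficient appearing in \eqref{def:Lu}.

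The main obstacle is this last bookkeeping step: three distinct mechanisms — the $s^2$ term in the expansion of $w_{2n-1}$, the quadratic $(v\,\partial_{2n}u)^2$ coming from $\dot w_{2n-1}^2$, and the $\partial_{2n}u$ defect in $(X_{2n-1}^u)^*$ — all feed into the first-order drift and into the zero-order coefficient of $L_u$, and their signs and coefficients must combine cleanly into the stated formula. The algebra is therefore delicate but mechanical; beyond the standard product and chain rules, no extra ingredient is required.
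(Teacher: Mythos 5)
Your proposal is correct in substance and follows a genuinely different route from the paper's. The paper does not differentiate the area integrand twice: it defines $L_u$ as the linearization $\tfrac{d}{ds}\big|_{s=0}H_{u+sv}$ of the divergence-form mean-curvature operator \eqref{def:H_u}, letting $\tfrac{d}{ds}$ act both on the coefficients $A_i(\na{u_s}u_s)$ and on the $s$-dependent field $X_{2n-1}^{u+sv}$, and then relies (tacitly) on the identity $\tfrac{d}{ds}\area(G_{u_s})=-\int_\Omega v\,H_{u_s}\,d\mc{L}^{2n}$, valid for every $s$ by the integration by parts underlying \eqref{eq:1stvariationdebole}, to convert this into the second variation. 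You instead compute the pointwise second derivative $F''(0)$ of $\sqrt{1+|W(s)|^2}$ and integrate by parts once at the end; your expansion of $w_{2n-1}(s)$ (with the $s^2$ term and the extra $v\,\partial_{2n}u$ in $\dot w_{2n-1}(0)$) together with the identity $F''(0)=F^{-1}\sum a_{ij}(W)\dot w_i\dot w_j+F^{-1}\,W\cdot \ddot W$ are exactly the right ingredients, and the adjoint formula $(X_{2n-1}^u)^*\phi=-X_{2n-1}^u\phi-\phi\,\partial_{2n}u$ is indeed what makes the cross terms close up. Your route has a concrete advantage: it produces the index form as a manifestly quadratic expression before any integration by parts, and if you push the bookkeeping to the end you will find $\tfrac{d^2}{ds^2}\big|_{s=0}\area(G_{u+sv})=-\int_\Omega v\,L_u v\,d\mc{L}^{2n}$ (consistent with strict stability being $\mathcal{I}(v,v)=-\int_\Omega v\,L_uv\,d\mc{L}^{2n}>0$, and checkable at $u=0$, where the second variation is $\int_\Omega|\nabla^0 v|^2\geq 0$ while $\int_\Omega vL_0v=-\int_\Omega|\nabla^0v|^2$), and moreover that the factor $(1+|\nau u|^2)^{-1/2}$ must accompany $a_{i\,2n-1}(\nau u)$ in the first- and zero-order coefficients as well, not only in the principal part. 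So the one inaccuracy in your write-up is the closing claim that the three mechanisms assemble into the formula \emph{as stated}: they assemble into this corrected version, and for that reason it is worth displaying the two integrations by parts explicitly rather than asserting that the remaining algebra is mechanical.
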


\begin{proof} It is a standard computation show that 
\begin{equation}\label{eq:dHusv}
\begin{aligned}\nonumber
\frac{d}{ds}H_{u+sv} =& \somma{i} X_i^{u+sv}\bigg(\frac{X_i^{u+sv} (v)}{ \sqrt{1+|\na{u+sv} (u+sv)|^2}}\bigg)-\somma{i,j}X_i^{u+sv}\bigg(\frac{X_i^{u+sv} (u+sv)  X_j^{u+sv} (u+sv) X_j v }{ (1+|\na{u+sv} (u+sv)|^2)^{3/2}}\bigg)\\
&+v\partial_{2n}\bigg(\frac{X_{2n-1}^{u} (u+sv)}{ \sqrt{1+|\na{u+sv} (u+sv)|^2}}\bigg)+   X_{2n-1}^{u}\bigg(\frac{ v\partial_{2n}(u+sv)}{ \sqrt{1+|\na{u+sv} (u+sv)|^2}}\bigg)\\
& - \somma{i}  X_i^{u+sv} \bigg(\frac{X_i^{u+sv} (u+sv)  X_{2n-1}^{u+sv} (u+sv)v \partial_{2n} u }{ (1+|\na{u+sv} (u+sv)|^2)^{3/2}}\bigg).
\end{aligned}
\end{equation}
When we evaluate this expression at $s=0$ to obtain the following statement:  
\[
\begin{aligned}
\frac{d}{ds}\bigg|_{s=0}H_{u+sv} &= \somma{i} X_i^{u}\bigg(\frac{X_i^{u} (v)}{ \sqrt{1+|\na{u} (u)|^2}}\bigg)-\somma{i,j}X_i^{u}\bigg(\frac{X_i^{u} (u)  X_j^{u} (u) X^u_j v }{ (1+|\na{u} (u)|^2)^{3/2}}\bigg)\\
&+v\partial_{2n}\bigg(\frac{X_{2n-1}^{u} (u)}{ \sqrt{1+|\na{u} (u)|^2}}\bigg)+   X_{2n-1}^{u}\bigg(\frac{ v\partial_{2n}(u)}{ \sqrt{1+|\na{u} (u)|^2}}\bigg)\\
& - \somma{i}  X_i^{u} \bigg(\frac{X_i^{u} (u)  X_{2n-1}^{u} (u)v \partial_{2n} u }{ (1+|\na{u} (u)|^2)^{3/2}}\bigg)
\end{aligned}\]

Observing that
\[
\begin{aligned}
\frac{d}{ds}\bigg|_{s=0}H_{u+sv} 
&= \somma{i,j} a_{ij}(\na{u}u)\,X_i^u X_j^u v+\somma{i} \bigg( \somma{j} X_j^u(a_{ij}(\na{u}u)) +\partial_{2n}u \, a_{i 2n-1}(\na{u}u)  \bigg)\, X_i^u v \\
&+ \bigg(  \partial_{2n}\bigg( \frac{ X^u_{2n-1}u }{\sqrt{1+|\na{u}u|^2}} \bigg) +\somma{i} X_i^{u}(\partial_{2n}u \, a_{i 2n-1}(\na{u}u)) \bigg)\, v\\
&=\somma{i,j}X_i^{u} (a_{ij}(\na{u}u)\,X_j^u v)+\somma{i} \partial_{2n}u \, a_{i 2n-1}(\na{u}u) \, X_i^u v \\
&+ \bigg(  \partial_{2n}\bigg( \frac{ X^u_{2n-1}u }{\sqrt{1+|\na{u}u|^2}} \bigg) +\somma{i} X_i^{u}(\partial_{2n}u \, a_{i 2n-1}(\na{u}u)) \bigg)\, v
\end{aligned}
\]
we obtain the thesis.
\end{proof}

We can now introduce the following definition

\begin{definition} We call \emph{stability operator} $L_u$ associated to a minimal graph $G_u$ 
the operator $L_u v$ defined in equation \eqref{def:Lu}. 
\end{definition}

We consider a Euclidean Lipschitz minimal graph $G_u$ with vanishing mean curvature in the Heisenberg group $\hh^n$ and a smooth domain $\Omega$. We say that $G_u$ is \emph{strictly stable} if for all $v \in C^2_h(\Omega)$ with compact support, $v\not\equiv 0$, the \emph{index form}
\[
\mathcal{I}(v,v)=-\int\limits_\Omega v\,L_u v \,d \mc{L}^{2n}
\]
is strictly positive, where $L_u$ is the stability operator defined in \eqref{def:Lu}.

\begin{lemma}\label{small_stable}
Let us note that there exists a constant $C$ such that, if $u$ is a minimum and $||u||_{C^2}\leq C$, 
then $u$ is strictly stable
\end{lemma}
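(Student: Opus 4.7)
The plan is to treat $L_u$ as a small perturbation of the subelliptic Laplacian $L_0=\sum_i (X_i^0)^2$ corresponding to $u\equiv 0$, and to exploit a subelliptic Poincar\'e inequality to pin down strict positivity of $\mathcal{I}$ on compactly supported test functions.

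First I would rewrite $\mathcal{I}(v,v)=-\int_\Om v\,L_u v\,d\mathcal{L}^{2n}$ by integrating the principal term of \eqref{def:Lu} by parts. Since the fields $X_i=\partial_{x_i}$ for $i\le 2n-2$ are skew-adjoint in $L^2(\rr^{2n})$ and $(X_{2n-1}^u)^*=-X_{2n-1}^u-\partial_{2n}u$, the compact support of $v$ eliminates every boundary contribution and one reaches an expression of the shape
\[
\mathcal{I}(v,v)=\int_\Om\sum_{i,j}b_{ij}\,X_i^u v\,X_j^u v\,d\mathcal{L}^{2n}-\int_\Om\sum_i c_i\,v\,X_i^u v\,d\mathcal{L}^{2n}-\int_\Om d\,v^2\,d\mathcal{L}^{2n},
\]
where $b_{ij}=a_{ij}(\nabla^u u)/\sqrt{1+|\nabla^u u|^2}$ (so $b_{ij}(0,0)=\delta_{ij}$), while $c_i$ and $d$ are polynomial expressions in $u$ together with its first and second horizontal derivatives that vanish identically at $u\equiv 0$.

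Next I would use the hypothesis $\|u\|_{C^2}\le C$ to produce uniform bounds $|b_{ij}-\delta_{ij}|\le\eta(C)$ and $|c_i|,|d|\le\eta(C)$ with $\eta(C)\to 0$ as $C\to 0$; similarly $X_{2n-1}^u - X_{2n-1}^0 = u\,\partial_{2n}$ is $O(C)$. Applying Young's inequality $|v\,X_i^u v|\le\eps\,|X_i^u v|^2+v^2/(4\eps)$ to absorb the first-order cross terms into the principal part and a multiple of $\|v\|_{L^2}^2$, one obtains
\[
\mathcal{I}(v,v)\ge\bigl(1-\tilde\eta(C)\bigr)\int_\Om\sum_i|X_i^u v|^2\,d\mathcal{L}^{2n}-\tilde\eta(C)\int_\Om v^2\,d\mathcal{L}^{2n},
\]
again with $\tilde\eta(C)\to 0$ as $C\to 0$.

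Finally, the vector fields $\{X_i^u\}_{i=1}^{2n-1}$ satisfy H\"ormander's finite rank condition on $\rr^{2n}$ uniformly for $\|u\|_{C^2}\le C$ small: indeed the brackets $[X_i,X_{i+n-1}]=-\partial_{2n}$ for $i\le n-1$ do not depend on $u$ and recover the missing direction, and the remaining fields are smooth perturbations of $X_i^0$. The subelliptic Poincar\'e inequality therefore supplies a constant $c_0=c_0(\Om)>0$, uniform along the family, with $\int_\Om v^2\,d\mathcal{L}^{2n}\le c_0\int_\Om\sum_i|X_i^u v|^2\,d\mathcal{L}^{2n}$ for every compactly supported $v\in C^2_h(\Om)$. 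Substituting into the previous bound gives $\mathcal{I}(v,v)\ge(c_0^{-1}-2\tilde\eta(C))\int_\Om v^2\,d\mathcal{L}^{2n}$, which is strictly positive for all sufficiently small $C$ and vanishes only for $v\equiv 0$. The main obstacle lies in the careful bookkeeping of the perturbation parameters $\eta(C)$ and $\tilde\eta(C)$ through the integration by parts, together with verifying the uniformity of the Poincar\'e constant along the family $X_i^u$; the hypothesis that $u$ is a minimum enters only to guarantee that $L_u$ is genuinely the second variation of the area at $u$, so that strict positivity of $\mathcal{I}$ really amounts to strict stability of $G_u$.
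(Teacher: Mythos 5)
Your proposal is correct and follows essentially the same route as the paper's own proof: integrate the principal term by parts, observe that all first- and zero-order coefficients of $L_u$ are $O(\|u\|_{C^2})$ perturbations of the $u\equiv 0$ case, absorb them via Young/H\"older into the coercive quadratic form, and close the estimate with a Poincar\'e--Sobolev inequality for the fields $X_i^u$ (which is uniform here since $\partial_{x_1}$ is itself one of the fields for $n>1$). Your bookkeeping of the perturbation constants and of the uniformity of the Poincar\'e constant is in fact somewhat more careful than the paper's, which handles the zero-order term by writing $\partial_{2n}$ as a commutator of horizontal fields before integrating by parts, but the substance is the same.
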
\begin{proof}
\[
\begin{aligned} \int v L_u v &=\somma{i,j} \int v X_i^{u} \bigg(\frac{a_{ij}(\na{u}u)}{\sqrt{1+|\na{u}u|^2}}\,X_j^u v\bigg)+\somma{i} \int v \partial_{2n}u \, a_{i 2n-1}(\na{u}u) \, X_i^u v \\ &+ \int v\bigg(  \partial_{2n}\bigg( \frac{ X^u_{2n-1}u }{\sqrt{1+|\na{u}u|^2}} \bigg) +\somma{i} X_i^{u}(\partial_{2n}u \, a_{i 2n-1}(\na{u}u)) \bigg)\, v\\
\end{aligned}\]
Integrating by part the first term, and using the fact that 
$[X_1, X_{n+1}] =\partial_{2n},$, we get
\[
\begin{aligned} \int v L_u v &=- \somma{i,j} \int X_i^{u} v  \frac{a_{ij}(\na{u}u)}{\sqrt{1+|\na{u}u|^2}}\,X_j^u v
-\int  v \partial_{2n}u \frac{a_{ij}(\na{u}u)}{\sqrt{1+|\na{u}u|^2}}\,X_j^u v
\\ &+\somma{i} \int v \partial_{2n}u \, a_{i 2n-1}(\na{u}u) \, X_i^u v \\ &- \int X_{n+1} \bigg( \frac{ X^u_{2n-1}u }{\sqrt{1+|\na{u}u|^2}} \bigg) v X_1 v + \int X_{1} \bigg( \frac{ X^u_{2n-1}u }{\sqrt{1+|\na{u}u|^2}} \bigg) v X_{n+1} v \\ &- \somma{i} \int  \partial_{2n}u \, a_{i 2n-1}(\na{u}u)   X_i^{u} v v - \int  (\partial_{2n}u)^2 \, a_{i 2n-1}(\na{u}u) v^2
\end{aligned}\]
By the structure of the equation we have 
$$\somma{i,j} \int X_i^{u} v  \frac{a_{ij}(\na{u}u)}{\sqrt{1+|\na{u}u|^2}}\,X_j^u v \geq 
\int | X_i^{u} v|^2 \geq \int |v|^2 
$$
By H\"older inequality and Sobolev embedding Theorem we have 
$$\int v X_i^{u} v \leq \int | X_i^{u} v||v|  \leq C_S \int | X_i^{u} v|^2$$
so that 
$$- \int v L_u v \geq (1 - C_S ||u||^2_{C^2})\int | X_i^{u} v|^2.
$$
It follows that $- \int v L_u v$ is strictly positive if $||u||^2_{C^2}$ is sufficiently 
small. 
\end{proof}

\section{An existence result}

In this section we prove a first existence result for solutions of the problem 
\eqref{critical_points}.
Precisely we show that 
\begin{proposition}\label{existence} Let $u\in C^2_h(\tilde{\Om})$ and let $G_u$ be critical point of the area functional in $\hh^n$, $n>1$. We assume that $G_u$  is  strictly stable in a domain $\Omega\subset\tilde{\Om}$. Then there exist $\eps_0>0$, 
such that if $||\phi - u||_{L^\infty(\partial \Omega)} \leq \eps_0$, 
then the problem \eqref{critical_points}
has an unique solution $v\in C^\infty(\Omega)$. 
\end{proposition}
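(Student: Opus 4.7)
The plan is to treat this as a perturbative problem around $w = 0$ for the nonlinear operator
\begin{equation*}
\mathcal{N}(w) := H_{u+w},
\end{equation*}
acting on functions $w$ whose boundary trace is the small datum $\phi - u|_{\partial\Omega}$. Since $u$ is itself a critical point we have $\mathcal{N}(0) = H_u = 0$, and the Fréchet derivative of $\mathcal{N}$ at $w = 0$ is exactly the stability operator $L_u$ from Proposition~\ref{prop:2ndvariation}. Strict stability makes the index form $\mathcal{I}$ coercive on compactly supported functions, which should force $L_u$ to be an isomorphism from the space of $C^{2+\alpha}_d(\Omega)$ functions with zero trace on $\partial\Omega$ onto $C^{\alpha}_d(\Omega)$; this linear inversion is precisely the content of the Schauder-type estimate with boundary penalty that is the technical heart of the section.

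Granted that invertibility, the existence statement follows by a Banach fixed-point argument in $C^{2+\alpha}_d(\Omega)$. Extending the boundary datum $\phi - u|_{\partial\Omega}$ to a function $\psi$ in $\Omega$ with $\|\psi\|_{2+\alpha, d, \Omega}\le C\|\phi-u\|_{L^\infty(\partial\Omega)}$, one looks for $w = \psi + \tilde w$ with $\tilde w|_{\partial\Omega}=0$ and rewrites the equation as
\begin{equation*}
L_u \tilde w \;=\; L_u(\psi + \tilde w) - \mathcal{N}(\psi+\tilde w) - L_u\psi .
\end{equation*}
The right-hand side is quadratic in $(\psi,\tilde w)$, since $\mathcal{N}(0)=0$ and $D\mathcal{N}(0)=L_u$, so the map $\tilde w \mapsto L_u^{-1}(\text{RHS})$ sends a small closed ball in the zero-trace subspace of $C^{2+\alpha}_d$ into itself and is a contraction provided $\varepsilon_0$ is taken sufficiently small. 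The fixed point furnishes a unique solution $v = u+\psi+\tilde w$ close to $u$; once one has such a $C^{2+\alpha}_d$ solution, standard interior subelliptic regularity for quasilinear Hörmander-type equations (applied to \eqref{def:M_u}) upgrades it to $C^\infty(\Omega)$. Local uniqueness is built into the contraction, and any hypothetical second solution far from $u$ but with the same boundary datum can be excluded using the positivity of $L_{\bar v}$ for $\bar v$ close to $u$ together with the maximum principle (Proposition~\ref{lem:maximum principle}).

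The chief obstacle, flagged explicitly in the introduction, is the absence of classical Schauder estimates up to the boundary for subelliptic operators such as $L_u$ in $\hh^n$ with $n>1$. The way around this is to work in the weighted Hölder space $C^{2+\alpha}_d(\Omega)$ introduced in the Preliminaries, whose seminorms are permitted to blow up near $\partial\Omega$ at a controlled rate governed by the intrinsic distance $d_u$. Interior Schauder estimates in the spirit of \cite{MR0436223,MR2298970}, applied on balls $B_r(x)$ with $r\sim d_u(x,\partial\Omega)$, combined with the coercivity coming from strict stability of $G_u$, yield the mapping property of $L_u$ needed for the fixed-point step; once this linear theory is in place the rest of the argument is rather standard.
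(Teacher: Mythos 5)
Your proposal is correct and follows essentially the same route as the paper: the paper applies the Inverse Function Theorem to the map $F(w)=(H_w,w|_{\partial\Omega})$, using strict stability to show $\ker dF_u=0$ and Proposition~\ref{prop:scauder} for continuity of the inverse, then upgrades to $C^\infty$ via Lemma~\ref{cduetosmooth}; your Banach fixed-point argument is just the unpacked form of that same IFT step. If anything, your explicit treatment of the boundary extension and of uniqueness beyond the local neighborhood is slightly more careful than what the paper writes down.
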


Let us explicitly note that, 
if  $u$ is a solution of class $C^2_h$ of \eqref{critical_points}, then it is of class $C^\infty$. 
Using mollifiers we can mimic the proof of \cite[Theorem~1.2]{MR2774306} and we can easily conclude the following

\begin{lemma}\label{cduetosmooth}
 Let $G_u=\{z=u(x): x\in \Omega \}$ be a $C^{2}_h$  graph in $\hh^n$ with mean curvature $H_u=g\in C^\infty(\Om)$, $n>1$. Then $u$ is a smooth function.  
\end{lemma}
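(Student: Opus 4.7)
The plan is to follow the argument of \cite[Theorem~1.2]{MR2774306} adapted to the case of a strong $C^2_h$ solution rather than a Lipschitz vanishing viscosity solution. Since $u\in C^2_h(\Om)$ implies $u\in C^1(\Om)$ in the Euclidean sense (as noted in Subsection~2.8), $u$ is Euclidean Lipschitz on compact subsets and $\nau u$ is continuous. We can regularize $u$ via group-convolution mollifiers $u_\eps=u*\rho_\eps$, obtaining smooth approximants that converge to $u$ in $C^2_h$ on compacta and satisfy approximate minimal surface equations with smooth right-hand sides $g_\eps\to g$; the uniform $C^2_h$ bounds on $u_\eps$ will transfer to the limit once higher regularity is proved at the approximate level.

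Write the equation in the non-divergence form $M_u u=g$ from \eqref{def:M_u} and consider the frozen-coefficient linear operator
\[
L w \;:=\; \somma{i,j} a_{ij}(\nau u)\,X_i^u X_j^u w.
\]
Because $u$ is $C^2_h$, the vector fields $X_i^u$ have $C^2_h$ coefficients and the matrix $(a_{ij}(\nau u))$ is continuous and uniformly elliptic along the horizontal directions (see \eqref{def:aij}). Differentiating the equation $M_u u=g$ along a horizontal vector field $X_k$ and exploiting the bracket identity $\partial_{2n}=[X_1,X_{n+1}]$ (which requires $n\geq 2$), one obtains a linear subelliptic equation for $X_k u$ of the form $L(X_k u)=F_k$, where $F_k$ is a polynomial expression in $g$, its horizontal derivatives, and products of first and second horizontal derivatives of $u$ already known to be continuous. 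Interior Schauder estimates for linear subelliptic operators of H\"ormander type, as in \cite{MR0436223} and \cite{MR2298970}, then give $X_k u\in C^{1,\alpha}_{h,\mathrm{loc}}$ for some $\alpha\in(0,1)$, hence $u\in C^{2,\alpha}_{h,\mathrm{loc}}$.

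A bootstrap closes the proof. If $u$ is known to be $C^{k,\alpha}_h$ on compact subsets, then both the coefficients of $L$ and the vector fields $X_i^u$ are $C^{k-1,\alpha}_h$ and $F_k$ is $C^{k-2,\alpha}_h$; the interior Schauder estimate upgrades $u$ to $C^{k+1,\alpha}_h$. Iterating yields $u\in C^\infty_h$ on every compact subset of $\Om$, and the embedding $C^{2k}_h\subset C^k$ mentioned in Subsection~2.8 gives $u\in C^\infty(\Om)$. The main obstacle, and the reason for the hypothesis $n>1$, is precisely that in $\hh^1$ the transverse direction $\partial_{2n}$ cannot be recovered from a commutator of horizontal vector fields appearing in the differentiated equation, so the bootstrap fails to close; this is why only Lipschitz-level regularity results are available for $n=1$, while Capogna et al.'s method and therefore the present lemma work for $n>1$.
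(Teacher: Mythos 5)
Your proposal is correct and follows essentially the same route as the paper: both regularize $u$ by mollification and then invoke the regularity machinery of Capogna--Citti--Manfredini (\cite{MR2774306}) for $n>1$ --- differentiating the equation, using the commutator identity to recover the vertical derivative from horizontal fields, and bootstrapping via interior Schauder estimates. The paper's own proof is terser, passing to the limit in the representation formula (4.2) of that reference and then deferring wholesale to its Section~4, but the underlying argument is the one you describe.
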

\begin{proof}

We denote by $u_\eps$ the standard mollificators 
\[
u_\eps(x)=\int\limits_{\rr^{2n}} u(y) \varphi_\eps (x-y) \, \eps^{-n}\, dy. 
\]
It is simple to verify that
\begin{itemize}
\item [(i)]  $X_{\bar{u}} f_\eps\rightarrow X_{\bar{u}} f$ uniformly on compact subsets of $\Om$, for $\eps\rightarrow 0$.
\item [(ii)] $X_{\bar{u}}^2 f_\eps\rightarrow X_{\bar{u}}^2 f$ uniformly on compact subsets of $\Om$, for $\eps\rightarrow 0$.
\end{itemize}
For every $\epsilon$, the function $u_\eps$ satisfies the representation formula 
(4.2) in \cite{MR2774306}. 
Letting $\eps$ go to $0$, the same formula is satisfied by $u$
Hence we can now proceed as in \cite[Section~4]{MR2774306}, and obtain the 
smoothness result.   

\end{proof}

Finding a solution of the problem \eqref{critical_points} is equivalent to prove the 
invertibility of the map 
$$F:C^{2,\alpha}_d(\Omega)\rightarrow  C^{0,\alpha}_d(\Omega)\times C(\partial\Omega)$$ 
defined by 
\[
F(w)=(H_w,w\big|_{\partial\Omega}),
\] 

As it is well known, the local invertibity property of $F$ can be studied through its differential $dF(u)=L_u$, 
so that we focus on this linear operator. The continuiuty of its inverse is expressed by the Schauder 
estimates at the boundary, in suitable $C^\alpha$ spaces:

\begin{proposition}\label{prop:scauder} Let $L_u$ defined by \eqref{def:Lu}, where $u:\Omega\subset \rightarrow \rr$ is a smooth function. Let $f\in C^\alpha_d(\Omega)$ and let $v\in C^{2+\alpha}_{loc}(\Omega)$ a bounded function satisfying $L_u v=f$ in $\Omega$. Then $v\in C^{2+\alpha}_{d}(\Omega)$ and there exists $C>0$ (independent of $v$) such that
\[
|| v||_{C_d^{0,\alpha}( \Omega)}\leq C \left( ||v||_{L^\infty(\Omega)}  + ||d^2 f||_{C_d^{0,\alpha}( \Omega)} \right). 
\]
\end{proposition}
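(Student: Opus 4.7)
The strategy I would follow is the one pioneered by Gilbarg and H\"ormander for weighted intermediate Schauder estimates in the classical elliptic case: derive the weighted $C^{2+\alpha}_d$ bound from an unweighted interior Schauder estimate applied on Carnot--Carath\'eodory balls of radius proportional to $d_u(\cdot,\partial\Omega)$, letting the $d$-weights absorb the scaling factors exactly.

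The first step is to record, for the linear operator $L_u$ with smooth coefficients, an unweighted interior Schauder estimate in scale-invariant form
\[
r^{2}\|X_i^u X_j^u v\|_{L^\infty(B_r(x_0))} + r^{2+\alpha}[X_i^u X_j^u v]_{C^\alpha(B_r(x_0))} \leq C\bigl(\|v\|_{L^\infty(B_{2r}(x_0))} + r^2\|f\|_{L^\infty(B_{2r}(x_0))} + r^{2+\alpha}[f]_{C^\alpha(B_{2r}(x_0))}\bigr),
\]
valid whenever $B_{2r}(x_0)\subset\tilde\Omega$ and $r\le r_0$, with $C$ independent of $x_0$ and $r$. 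Such estimates are the content of the Rothschild--Stein type interior Schauder theory for smooth H\"ormander operators, cf.\ \cite{MR0436223,MR2298970}; the explicit $r$-dependence reflects the fact that the $X_i^u$ are vector fields of weight one that dilate by $r^{-1}$ when $B_r$ is normalized to unit size.

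Given this interior estimate, for each $x\in\Omega$ I would set $d_x:=d_u(x,\partial\Omega)$ and apply the estimate with $x_0=x$ and $r=d_x/4$. Since $d_y$ is comparable to $d_x$ throughout $B_{d_x/2}(x)$, this yields the pointwise bound
\[
d_x\, |X_i^u v(x)|\, +\, d_x^2\, |X_i^u X_j^u v(x)| \leq C\bigl(\|v\|_{L^\infty(\Omega)} + \|d^2 f\|_{C^{0,\alpha}_d(\Omega)}\bigr),
\]
which handles the two sup-parts of $\|v\|_{2+\alpha,d,\Omega}$ after taking the supremum over $x$. For the H\"older seminorm of $d^2 X_i^u X_j^u v$, take $x,y\in\Omega$ with $d_x\le d_y$, so $d_{x,y}=d_x$, and split into two cases. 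When $d_u(x,y)\ge d_x/8$, the ratio $d_{x,y}^\alpha/d_u(x,y)^\alpha$ is bounded and the incremental quotient is controlled by twice the already-bounded sup term. When $d_u(x,y)<d_x/8$, both points lie in $B_{d_x/4}(x)$ with $d_y\sim d_x$; the interior H\"older estimate applied on that ball, together with the elementary bound $|d_x^2-d_y^2|\le 2 d_y\, d_u(x,y)$, produces exactly the required bound, since $r^{2+\alpha}$ and $d_{x,y}^\alpha d_x^2$ are of the same order of magnitude.

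The main technical difficulty lies in establishing the scale-invariant interior estimate of the first step with a constant that is genuinely uniform as $x_0$ varies in $\overline{\Omega}$. The vector fields $X_i^u$ are H\"ormander but not left-invariant, since $X_{2n-1}^u$ depends explicitly on $u$, so no Heisenberg dilation conjugates $L_u$ into itself; one must freeze the coefficients at each $x_0$, compare $L_u$ to the corresponding constant-coefficient Heisenberg-model operator via the Rothschild--Stein approximation, and transfer the uniform model estimate to $L_u$ by a perturbation argument on sufficiently small balls. Once the unweighted estimate is secured, the weighted bound and the regularity $v\in C^{2+\alpha}_d(\Omega)$ follow from the covering/scaling argument sketched above.
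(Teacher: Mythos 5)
Your proposal follows essentially the same route the paper indicates: the paper does not write out a proof but states that the estimate follows from interior Schauder estimates for smooth H\"ormander-type operators (citing \cite{MR2298970}) with explicit tracking of the constant's dependence on the distance to the boundary, or alternatively by mimicking the weighted interior estimates of \cite[\S~4]{MR1814364} via the fundamental solution --- which is precisely the Gilbarg--H\"ormander covering-and-scaling argument you carry out on Carnot--Carath\'eodory balls of radius comparable to $d_u(\cdot,\partial\Omega)$. Your write-up correctly identifies the only genuinely delicate point (uniformity of the scale-invariant interior constant, obtained by freezing coefficients and the Rothschild--Stein approximation) and supplies the details the paper omits.
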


The proof of Proposition \ref{prop:scauder} is standard with the techniques developed in the last two decades, since $L_u$ is a sub-elliptic second order linear operator with smooth coefficients. Interior Schauder estimates for $L_u$ can be found in \cite{MR2298970}. Here we need similar estimates, but we need to provide an explicit 
estimate of the constant $C$ in terms of the distance from $K$ to the exterior of $\Omega$. 

Due to the existence of a fundamental solution for $L_u$, it is also possible 
to obtain these estimates, mimicking the classical argument 
presented in the Euclidean setting in \cite[\S~4]{MR1814364}.


\begin{proof}[Proof of Lemma \ref{existence}] We consider the map $F:C^{2,\alpha}_d(\Omega)\rightarrow  C^{0,\alpha}_d(\Omega)\times C(\partial\Omega)$ defined by 
\[
F(w)=(H_w,w\big|_{\partial\Omega}),
\] 
where $H_w$ is the mean curvature of the graph $G_{w}$. 
The differential of $F$ in $u$ is 
\[
dF_u(v)=(L_u v,v\big|_{\partial\Omega}).
\]
The kernel of $dF_u$ does not contain a non-trivial function, since otherwise the first eigenvalue of $L_u$ in $\Omega$ would be smaller than or equal to $0$. On the other hand, the problem 
\[
\begin{cases}
L_u v=f, & \text{in } \Omega\\
v=\phi, & \text{in } \partial\Omega
\end{cases}
\]
has a solution for all $f\in C^{0,\alpha}_d(\Omega)$ and $\phi\in C(\Omega)$.
From Proposition \ref{prop:scauder} 
the inverse of $dF_u$ is continuous. By the Implicit Function Theorem, there is a diffeomorphism from a neighborhood of $u$ in $C^{2,\alpha}_d(\Omega)$ into a neighborhood of $(H_u, u_{|\partial \Omega})$ in $C^{0,\alpha}_d(\Omega)\times C(\partial\Omega)$. In particular, there exists $\eps_0>0$ so that, for every $\phi$ such that 
$||\phi - u||_{L^\infty(\partial \Omega)} \leq \eps_0$, 
the given problem has a solution 
 $v$ such that the graph $G_{v}$ of $v$ is area-stationary with zero mean curvature and $G_{v}\big|_{\partial\Omega}=\psi$.

Finally the function $v$ is of class $C^\infty$ for Corollary \ref{cduetosmooth}. 
\end{proof}

\section{Area minizing property}

Let us note that, if $u$ is strictly stable, ten the first eigenvalue $\lambda_1(\Omega)$ of the operator $L_u$ is positive, in fact if $\lambda$ is an eigenvalue of $L_u$ with eigenfunction $v\in C^2_{h}(\Omega)\cap C_{0}(\Omega)$ we have
\[
0<-\int\limits_\Omega v\,L_u v  \,d \mc{L}^{2n}=\lambda \int\limits_{\Omega}  v^2 \,d \mc{L}^{2n}.
\]
As a consequence the following Maximum principle holds:

\begin{lemma}\label{lem:maximum principle}
We consider the operator $L_u v$ defined in \eqref{def:Lu}. Suppose that $\lambda_1(\Omega)>0$, where $\lambda_1(\Omega)$ denotes the first eigenvalue of $L$ on a bounded $C^{2,\alpha}$ domain $\Omega\subset \rr^n$. Assume that $Lv\leq 0$ and $\inf_{\partial\Omega}v>0$, then $\inf_{\Omega}v>0$.
\end{lemma}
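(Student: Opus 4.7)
The plan is to reduce the statement, whose difficulty is that the zero-order coefficient of $L_u$ may have either sign, to a classical subelliptic weak maximum principle by dividing $v$ by the positive principal eigenfunction $\phi_1$ of $L_u$. Since $\lambda_1(\Omega)>0$ and $\partial\Omega$ is $C^{2,\alpha}$, the internal Schauder estimates of Proposition~\ref{prop:scauder} together with a Krein--Rutman argument applied to the compact positive resolvent of $L_u$ produce an eigenfunction $\phi_1\in C^\infty(\Omega)\cap C(\overline\Omega)$ with $\phi_1>0$ in $\Omega$, $\phi_1|_{\partial\Omega}\equiv 0$, and $L_u\phi_1=-\lambda_1\phi_1$.

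I then set $w:=v/\phi_1$. Since $v\geq\inf_{\partial\Omega}v>0$ near $\partial\Omega$ while $\phi_1\to 0$ there, $w(x)\to+\infty$ as $x\to\partial\Omega$, so $w$ attains its infimum at an interior point $x_0\in\Omega$. At $x_0$ the full Euclidean gradient of $w$ vanishes and the Euclidean Hessian $D^2w(x_0)$ is positive semidefinite; since each $X_i^u$ is a linear combination of Euclidean partials, this gives $X_i^u w(x_0)=0$ and $(X_i^u X_j^u w(x_0))_{ij}=C^T D^2 w(x_0) C$ is positive semidefinite, where $C$ encodes the coefficients of the $X_i^u$'s in terms of Euclidean partials. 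Combined with the positive-definiteness of $(a_{ij})$ this gives
\begin{equation*}
\somma{i,j}\frac{a_{ij}(\nabla^u u)}{\sqrt{1+|\nabla^u u|^2}}\,X_i^u X_j^u w(x_0)\ \geq\ 0.
\end{equation*}
Expanding $L_u(\phi_1 w)$ by Leibniz and discarding every term with a factor $X_i^u w(x_0)$ yields
\begin{equation*}
L_u v(x_0)\ =\ w(x_0)\,L_u\phi_1(x_0)+\phi_1(x_0)\somma{i,j}\frac{a_{ij}(\nabla^u u)}{\sqrt{1+|\nabla^u u|^2}}\,X_i^u X_j^u w(x_0).
\end{equation*}
Substituting $L_u\phi_1=-\lambda_1\phi_1$ and using $\phi_1(x_0),\lambda_1>0$ together with $L_u v(x_0)\leq 0$ forces $w(x_0)\geq 0$, so $w\geq 0$ and $v\geq 0$ on $\Omega$.

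To upgrade $v\geq 0$ to the strict bound $\inf_\Omega v>0$, I observe that the pointwise identity above, written at arbitrary $x\in\Omega$, yields a linear subelliptic differential inequality of the form $(\tilde L-\lambda_1)w\leq 0$, where the transformed operator $\tilde L$ has no zero-order term and hence $\tilde L-\lambda_1$ has strictly negative zero-order coefficient. A subelliptic strong maximum principle then rules out a non-positive interior minimum of $w$ unless $w$ is constant, which is incompatible with $w\to+\infty$ at $\partial\Omega$. Therefore $w>0$ strictly in $\Omega$ and $v>0$ on $\overline\Omega$; compactness then yields $\inf_\Omega v>0$. The main obstacle I anticipate is the Hopf-type boundary behaviour of $\phi_1$ ensuring $w\to+\infty$ at $\partial\Omega$: this requires a subelliptic lower bound $\phi_1(x)\gtrsim d_u(x,\partial\Omega)$, which is standard for smooth $\partial\Omega$ but here depends on the boundary regularity theory for $L_u$ developed earlier in the paper.
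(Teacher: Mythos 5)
Your core mechanism is the same as the paper's: divide $v$ by a positive principal eigenfunction to absorb the indefinite zero-order coefficient, locate an interior minimum of the quotient $w$, kill the first-order cross terms there, use positive semidefiniteness of $(X_i^uX_j^uw)$ against $(a_{ij})$, and let $\lambda_1>0$ force $w\ge 0$. The one substantive difference is the choice of eigenfunction. The paper extends $L_u$ to a slightly larger domain $\tilde{\Omega}\supset\overline{\Omega}$ and uses the eigenfunction $v_1$ of $\tilde{\Omega}$, which is strictly positive on all of $\overline{\Omega}$; then $w=v/v_1$ is continuous up to $\partial\Omega$ and positive there, so a negative interior minimum of $v$ immediately yields a negative interior minimum of $w$, with no boundary analysis of the eigenfunction needed. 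You instead use the Dirichlet eigenfunction $\phi_1$ of $\Omega$ itself, which vanishes on $\partial\Omega$, and rely on the blow-up $w\to+\infty$ at the boundary to trap the minimum inside. That works, but it loads the argument onto exactly the weak point of the subelliptic theory that the authors flag elsewhere: you need existence of $\phi_1\in C(\overline{\Omega})$ with $\phi_1|_{\partial\Omega}=0$ and $\phi_1>0$ inside, i.e.\ boundary regularity for the Dirichlet problem, for which Schauder estimates up to $\partial\Omega$ are not available in this setting. (Note, though, that the Hopf-type lower bound $\phi_1\gtrsim d_u(\cdot,\partial\Omega)$ you worry about at the end is not actually needed: for $w=v/\phi_1\to+\infty$ you only need $\phi_1\to 0$ and $v$ bounded below near $\partial\Omega$.) The paper's enlarged-domain trick buys freedom from all boundary behaviour of the eigenfunction, at the price of justifying that $\lambda_1(\tilde{\Omega})>0$ persists for $\tilde{\Omega}$ close to $\Omega$. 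Finally, both arguments need a separate strong-maximum-principle step to exclude a zero interior minimum: the paper does this first via a Hopf argument on the dilated sets $\delta_rO_1$, you do it last via the transformed operator with strictly negative zero-order term; these are equivalent in substance. One small point to tighten: since $v$ is a priori only $C^2_h$, the Euclidean Hessian $D^2w(x_0)$ need not exist; you should instead run the second-derivative test along integral curves of $\sum_i\xi_iX_i^u$, which gives $\sum_{i,j}\xi_i\xi_jX_i^uX_j^uw(x_0)\ge 0$ directly.
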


\begin{proof} First we observe that the minimum of $v$ can not be zero, otherwise $v\equiv 0$. This can be achieved mimic the classical proof of the Hopf's maximum principle (see for example \cite[Theorem~3.5]{MR1814364} or \cite[\S~3]{MR2639314}) and replacing the role of Euclidean balls with the sets $\delta_{r}O_1$, $r>0$, introduced in \cite[p.~1161]{MR1919700}.

Now we suppose that $v$ achieved a negative minimum in the interior of $\Omega$, then we can extend the the operator $L_u$ in a small neighborhood $\tilde{\Omega}$ of $\Omega$ and we can suppose that the first eigenvalue $\lambda_1(\tilde{\Omega})$ is positive as well as the restriction to $\Omega$ of the eigenfunction $v_1$ associated to $\lambda_1(\tilde{\Omega})$. We consider the function $w=v/v_1$ on $\Omega$, that must has a negative minimum in some interior point $p$. Hence at the point $p$
\[
\begin{aligned}
0\geq L_u(v)=L_u(w v_1)=L_u(v_1)w+v_1 \somma{i,j}X_i^{u} (a_{ij}(\na{u}u)\,X_j^u w)\geq -\lambda_1(\tilde{\Omega}) v_1 w
\end{aligned}
\]
and we can conclude that $w(p)\geq 0$ and $v(p)\geq 0$ in contradiction with our assumption. 
\end{proof}

\begin{lemma}\label{lem:existencecalibration} Let $u\in Lip(\tilde{\Om}) \cap C^2_h(\Om)$ and let $G_u$ be a  minimal graph in $\hh^n$, $n>1$. We assume that $G_u$  is  strictly stable in a domain $\Omega\subset\tilde{\Om}$. Then there exist $\eps_0>0$, a tubular neighborhood $U$ of $G_u$ and a family $G_{u_\eps}$ of  surfaces with vanishing mean curvature such that $G_{u_0}=G_u$ and  $G_{u_\eps}$ is a foliation of $U$ for $\eps\in [-\eps_0,\eps_0]$.
\end{lemma}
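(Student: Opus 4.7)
The plan is to produce the foliation $\{G_{u_\eps}\}$ by solving, for each small $\eps$, the Dirichlet problem for the minimal-surface equation with boundary data obtained from $u|_{\partial\Omega}$ by a constant vertical shift, and then to use strict positivity of the stability operator, via the Maximum Principle (Lemma~\ref{lem:maximum principle}), to guarantee that the family is strictly monotone in $\eps$.

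First I would apply Proposition~\ref{existence} with boundary datum $\phi_\eps:=u|_{\partial\Omega}+\eps$. Since $\|\phi_\eps-u\|_{L^\infty(\partial\Omega)}=|\eps|$, for $\eps_0$ sufficiently small the proposition produces a unique $u_\eps\in C^\infty(\Omega)$ with $H_{u_\eps}=0$ in $\Omega$ and $u_\eps=\phi_\eps$ on $\partial\Omega$; uniqueness forces $u_0=u$. Moreover, the proof of Proposition~\ref{existence} obtains $u_\eps$ by applying the Implicit Function Theorem to $F(w)=(H_w,w|_{\partial\Omega})$ at $u$; since $\eps\mapsto(0,u|_{\partial\Omega}+\eps)$ is a smooth curve in $C^{0,\alpha}_d(\Omega)\times C(\partial\Omega)$, pulling back via the local inverse of $F$ produces a smooth curve $\eps\mapsto u_\eps$ in $C^{2,\alpha}_d(\Omega)\cap C(\bar\Omega)$ with the prescribed boundary trace.

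Next I would set $w:=\tfrac{d}{d\eps}\big|_{\eps=0}u_\eps$. Differentiating $H_{u_\eps}\equiv 0$ and $u_\eps|_{\partial\Omega}=u|_{\partial\Omega}+\eps$ at $\eps=0$ gives
\[
L_u w=0 \ \text{in}\ \Omega,\qquad w=1 \ \text{on}\ \partial\Omega.
\]
Strict stability of $u$ yields $\lambda_1(\Omega)>0$, so Lemma~\ref{lem:maximum principle} applies to $w$ (with $L_u w\leq 0$ and $\inf_{\partial\Omega}w=1>0$) and delivers $\inf_{\Omega}w\geq c$ for some $c>0$. By continuity of $\eps\mapsto\partial_\eps u_\eps$ in the sup norm and compactness, after shrinking $\eps_0$ I obtain $\partial_\eps u_\eps(x)\geq c/2$ for every $x\in\Omega$ and every $\eps\in[-\eps_0,\eps_0]$.

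Finally, strict monotonicity of $\eps\mapsto u_\eps(x)$ makes the graphs $G_{u_\eps}$ pairwise disjoint, and the map $(x,\eps)\mapsto(x,u_\eps(x))$ a smooth diffeomorphism from $\Omega\times(-\eps_0,\eps_0)$ onto an open tubular neighborhood $U$ of $G_u$ whose level sets $\{\eps=\text{const}\}$ are exactly the leaves $G_{u_\eps}$. The main obstacle is the Maximum-Principle step: one must check that the sub-Riemannian Hopf-type arguments invoked in Lemma~\ref{lem:maximum principle} really apply to $w$ in the regularity class $C^{2,\alpha}_d(\Omega)\cap C(\bar\Omega)$ with boundary trace $1$, and return a strictly positive interior infimum; once $\inf_\Omega w>0$ is established, the persistence of positivity for small $\eps$ and the foliation conclusion are essentially mechanical.
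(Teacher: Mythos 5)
Your proposal is correct and follows essentially the same route as the paper's proof: obtain $u_\eps$ by solving the Dirichlet problem with boundary datum $u|_{\partial\Omega}+\eps$ via the Implicit Function Theorem applied to $F(w)=(H_w,w|_{\partial\Omega})$, then apply the maximum principle to $v=\tfrac{d}{d\eps}\big|_{\eps=0}u_\eps$, which satisfies $L_u v=0$ in $\Omega$ and $v=1$ on $\partial\Omega$, to conclude that the graphs foliate a tubular neighborhood. Your extra care about the uniform lower bound $\partial_\eps u_\eps\geq c/2$ over all $\eps\in[-\eps_0,\eps_0]$ is a detail the paper leaves implicit, but it does not change the argument.
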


\begin{proof} As before we consider the map 
\[
F(w)=(H_w,w\big|_{\partial\Omega}). 
\] 
We have proved that there is a diffeomorphism from a neighborhood of 
$u$ in $C^{2,\alpha}_d(\Omega)$ into a neighborhood of $(H_u, u_{|\partial \Omega})$ in $C^{0,\alpha}_d(\Omega)\times C(\partial\Omega)$. In particular, there exists $\eps_0>0$ so that, for all $\eps\in(-\eps_0,\eps_0)$, there is a function $u_\eps$ such that the graph $G_{u_\eps}$ of $u_\eps$ is area-stationary with zero mean curvature and $G_{u_\eps}\big|_{\partial\Omega}=u\big|_{\partial\Omega} + \eps$.

Let us check that the union of the graphs $G_{u_\eps}$ provide a foliation of a tubular neighborhood of $G_u$ in $\hh^n$. The graphs $G_{u_\eps}$ provide a variation of $G_{u}$ in $\hh^n$. If we
compute the variational function $v:=\frac{d}{d\eps}\big|_{\eps=0}u_\eps$, then $v$ satisfies 

\begin{displaymath}
\begin{cases}
L_u v=0,    & \text{ in } \Omega\\
v=1,      & \text{ in } \partial \Omega
\end{cases}.
\end{displaymath}
Since by Lemma \ref{lem:maximum principle} below $v>0$, choosing $\eps_0$ smaller if necessary, we would obtain that the graph $G_{u_\eps}$, for $\eps\in(-\eps_0,\eps_0)$ foliate a tubular neighborhood of $G_u$.

\end{proof}

Now we are ready to prove the main result of the paper

\begin{theorem}\label{thm:calibration} Let $u\in Lip(\tilde{\Om})\cap C^2_h(\Om)$ and let $G_u$ be a  minimal graph in $\hh^n$, $n>1$. We assume that $G_u$  is  strictly stable in a domain $\Omega\subset\tilde{\Om}$.. Then there exists a tubular neighborhood $U$ of $\Omega$ such that for any  $C^1_h$ graph $S\subset U$, $\partial\Omega=\partial S$, we have $\area(G_u)\leq \area(S)$ or $G_u=S$.
\end{theorem}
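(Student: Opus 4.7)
My approach would be the calibration-by-foliation argument mentioned in the introduction and attributed there to Grosse-Brauckmann: use Lemma~\ref{lem:existencecalibration} to foliate a tubular neighborhood of $G_u$ by minimal graphs, take the horizontal unit normal of this foliation as a divergence-free vector field $X$ on the neighborhood, and invoke the Riemannian divergence theorem.

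First I would apply Lemma~\ref{lem:existencecalibration} to obtain $\eps_0>0$, a tubular neighborhood $U$ of $G_u$, and a family $\{G_{u_\eps}\}_{\eps\in(-\eps_0,\eps_0)}$ of $C^\infty$ minimal graphs foliating $U$ with $G_{u_0}=G_u$. At each $p\in U$ I set $X(p):=\nuh(p)$, the horizontal unit normal to the unique leaf through $p$; this yields a horizontal vector field on $U$ with $|X|\equiv 1$. Next I would prove $\divv X\equiv 0$ on $U$ (Riemannian divergence with respect to $g$). Horizontality of $X$ gives $\escpr{X,T}\equiv 0$, and combined with $D_T T=J(T)=0$ this kills the $T$-direction contribution to $\divv X$. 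Completing $\{\nuh\}$ to an orthonormal horizontal frame $\{\nuh,e_1,\dots,e_{2n-1}\}$ at $p$ with the $e_i$ tangent to the leaf through $p$, the $\nuh$-direction term is $\tfrac{1}{2}\nuh(|X|^2)=0$, while $\sum_i \escpr{D_{e_i}X,e_i}$ is precisely the sub-Riemannian mean curvature of the leaf at $p$, which vanishes by minimality.

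Given a competitor $C^1_h$ graph $S\subset U$ with the same boundary as $G_u$, I would apply the Riemannian divergence theorem on the finite-volume region $R$ enclosed between $G_u$ and $S$:
\[
0=\int_R \divv X\, dV = \int_{G_u}\escpr{X,N_{G_u}}\, d\Sigma - \int_S\escpr{X,N_S}\, d\Sigma,
\]
with normals pointing out of $R$. On $G_u$ we have $X=\nuh$, so $\escpr{X,N_{G_u}}=|N_h|$ and the first integral equals $\area(G_u)$. On $S$, horizontality of $X$ and $|X|=1$ give $\escpr{X,N_S}=\escpr{X,(N_S)_h}\leq|(N_S)_h|$ by Cauchy--Schwarz, so the second integral is at most $\area(S)$. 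Hence $\area(G_u)\leq\area(S)$. In the equality case Cauchy--Schwarz must be saturated a.e.\ on $S$, forcing $(N_S)_h$ to be a scalar multiple of $X|_S$ everywhere, so $S$ is tangent to the foliation and, together with the boundary condition, $S=G_u$.

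The main obstacle I expect is justifying the divergence-theorem step under the regularity at hand: the leaves $G_{u_\eps}$ are $C^\infty$ and their parameter dependence should be smooth by the implicit function theorem used in Proposition~\ref{existence}, so $X$ is smooth on $U$, but $S$ is only $C^1_h$ and $\partial R$ is only $C^1$ in the horizontal sense on the $S$-side. Justifying the Riemannian divergence theorem for such regions likely needs an approximation argument or the theory of sets of finite perimeter in $\hh^n$. Past this technicality the argument is structurally identical to the Riemannian calibration proof.
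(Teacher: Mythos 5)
Your proposal is correct and is essentially the paper's own proof: both calibrate by the horizontal unit normal of the foliation from Lemma~\ref{lem:existencecalibration} and apply a divergence/Gauss--Green theorem on the region between $G_u$ and the competitor, the only cosmetic difference being that the paper writes the boundary terms with the sub-Riemannian perimeter measure $dP=|N_h|\,d\Sigma$ and the pairings $\escpr{(\nu_h)_{G_{u_\eps}},(\nu_h)_{G_u}}=1$, $\escpr{(\nu_h)_{G_{u_\eps}},(\nu_h)_{S}}\geq -1$, whereas you write $\escpr{X,N}\,d\Sigma$ and invoke Cauchy--Schwarz --- these are literally the same integrals since $X$ is horizontal. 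The equality case is also handled the same way (saturation forces the horizontal normals to coincide, whence $S=G_u$ by the uniqueness result of \cite{MR2600502}).
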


\begin{proof} From Lemma \ref{lem:existencecalibration} we know the existence of a family $\{G_{u_\eps}\}_{\eps\in(-\eps_0,\eps_0)}$ of minimal graphs $G_{u_\eps}$ that foliate a tubular neighborhood $U$ of $G_u$. Let $v:\Om\rightarrow \rr$ be a $C^1_h(\Om)$ function such that $u\equiv v$ on $ \partial\Om$. We denote by $E$ the region bounded by $G_u \cup G_v$. 
Using the Gauss-Green formula, \cite[Theorem~2.3]{MR2979606}, and the fact that $\escpr{(\nu_h)_{G_{u_\eps}},(\nu_h)_{G_u}}=1$ and $\escpr{(\nu_h)_{G_{u_\eps}},(\nu_h)_{G_v}    }\geq -1$, we get
\[
\begin{split}
0=\int\limits_E \divv_{\hh^n}((\nu_h)_{\Sg_\eps})\, d\mc{L}^{2n+1}&= \int\limits_{\hh^n}\escpr{(\nu_h)_{G_{u_\eps}},(\nu_h)_{G_u}} \, dP(G_u)+\int\limits_{\hh^n}\escpr{(\nu_h)_{G_{u_\eps}},(\nu_h)_{G_v}    } \, dP(G_v)\\
&\geq \area(G_u\cap U)-\area(G_v\cap U),
\end{split}
\]
with strict inequality unless $(\nu_h)_{G_v}$ coincides with $(\nu_h)_{G_u}$ in all points. Here $(\nu_h)_{G_u}$ denotes the horizontal projection of the Riemannian unit normal of $G_u$. We conclude that $G_u$ and $G_v$ coincides by \cite[Corollary~1.3]{MR2600502}.
\end{proof}

As a direct consequence of this result and the fact that $0$ is a stable solution, 
we obtain 
\begin{corollary}
\label{existence_minima}
Then there exist $\eps_0>0$, 
such that if $||\phi||_{L^\infty(\partial \Omega)} \leq \eps_0$, 
then the problem 
\[
\begin{cases}
H_v =0, & \text{in } \Omega\\
v=\phi, & \text{in } \partial\Omega
\end{cases}
\]
has an unique solution $v\in C^\infty(\Omega) \cap C(\bar \Omega)$, which is a stable minimum for 
the area functional. 

\end{corollary}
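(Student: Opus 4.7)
The plan is to apply the machinery developed in the preceding sections to the trivial minimal graph $u\equiv 0$ and its nearby perturbations. A direct computation shows $H_0 = 0$, and since $\|0\|_{C^2}=0$, Lemma \ref{small_stable} guarantees that $u=0$ is strictly stable. Thus Proposition \ref{existence} applies with $u=0$ and $\tilde\Om=\Om$, yielding an $\eps_0>0$ such that, for every $\phi$ with $\|\phi\|_{L^\infty(\partial\Omega)}\le\eps_0$, the Dirichlet problem admits a unique solution $v\in C^\infty(\Om)$. Boundary continuity of $v$ is inherited from the $C^{2,\alpha}_d$-regularity produced by the Implicit Function Theorem step of Proposition \ref{existence} together with the continuity of $\phi$ on $\partial\Om$.

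The nontrivial task is to upgrade ``critical point'' to ``stable minimum''. Strict stability of $v$ will follow by revisiting the IFT diffeomorphism constructed in the proof of Proposition \ref{existence}: the solution map $\phi\mapsto v$ is continuous from $C(\partial\Omega)$ into $C^{2,\alpha}_d(\Omega)$, so shrinking $\eps_0$ if necessary I can force $\|v\|_{C^2}$ to lie below the threshold constant of Lemma \ref{small_stable}. That lemma then gives strict stability of $v$.

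With $v$ a strictly stable minimal graph of class $C^\infty(\Om)\cap C(\bar\Om)$, Theorem \ref{thm:calibration} applied with $u$ replaced by $v$ produces a tubular neighborhood $U$ of $G_v$ such that $\area(G_v)\le\area(S)$ for every $C^1_h$ graph $S\subset U$ with $\partial S=\partial G_v$, and equality forces $G_v=S$. Combined with the uniqueness part of Proposition \ref{existence}, this identifies $v$ as the unique locally area-minimizing graph realizing the boundary datum $\phi$, which is the conclusion sought.

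The main obstacle I anticipate is the first passage: extracting smallness of the classical $C^2$-norm of $v$ from IFT-continuity measured in the weighted H\"older space $C^{2,\alpha}_d$, in a way uniform enough that Lemma \ref{small_stable} applies on all of $\Om$ rather than merely on a relatively compact subdomain. A secondary technicality is that Theorem \ref{thm:calibration} is formulated for $u\in\mathrm{Lip}(\tilde\Om)\cap C^2_h(\Om)$, while a priori $v$ belongs only to $C^\infty(\Om)\cap C(\bar\Om)$; this is handled by running the calibration/foliation argument on smooth subdomains exhausting $\Om$, on which $v$ is smooth up to the boundary and hence trivially Lipschitz.
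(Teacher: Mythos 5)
Your argument matches the paper's: the corollary is stated there as a direct consequence of Theorem \ref{thm:calibration} together with the observation that $u\equiv 0$ is strictly stable (via Lemma \ref{small_stable}) so that Proposition \ref{existence} applies, and your expansion --- deducing strict stability of the solution $v$ itself from its closeness to $0$ and then calibrating $G_v$ --- is exactly the intended route. The technical points you flag (weighted $C^{2,\alpha}_d$ versus classical $C^2$ control near $\partial\Omega$, and the $\mathrm{Lip}(\tilde\Om)$ hypothesis of Theorem \ref{thm:calibration}) are genuine, but the paper offers no proof of this corollary beyond a one-line remark, so your writeup supplies strictly more detail than the original.
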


\bibliography{calibrations01082016}

\end{document}